\newcommand{\Rbb}{\mathbb{R}}
\newcommand{\ufrak}{\mathfrak{u}}
\newcommand{\rfrak}{\mathfrak{r}}
\newcommand{\oderiv}[1]{ \frac{d^{#1}}{dt^{#1}} }
\newcommand{\Matern}{Mat\'ern}
\newcommand{\Ocal}{\mathcal{O}}
\newcommand{\Ical}{\mathcal{I}}
\name{Nicholas Krämer}%
\affiliation{Technical University of Denmark, Kongens Lyngby, Denmark}
\begin{document}

\section{Introduction: Adaptive Probabilistic Solvers vs. Limited Memory}

Probabilistic numerical methods solve problems from numerical computing by manipulating random variables \citep{hennig2022probabilistic}.
For example, conditioning a stochastic process $u(t)$ on the constraints $\left\{u'(t_n) = u(t_n)\,\right\}_{n=1}^N$ and the initial condition $u(0) = u_0$ approximates the solution of $u'(t) = u(t)$, $u(0) = u_0$ \citep{cockayne2019bayesian}.
Unlike more traditional techniques, these new algorithms emphasise uncertainty quantification and the interplay between simulators and observational data.
For instance, the works by \citet{kersting2020differentiable,tronarp2022fenrir,schmidt2021probabilistic,beck2024diffusion,oesterle2022probabilistic,oates2019bayesian,lahr2024probabilistic} show the utility of this new paradigm for uncertainty quantification of differential equations.
Unfortunately, the probabilistic solvers' usefulness is limited by memory, as follows:

Since solving nonlinear differential equations requires time discretisation, users must provide implementations with either (i) a grid; or (ii) an error tolerance, according to which the solver selects the grid automatically and adaptively.
This adaptive step-size selection has been one of the great improvements to the practicality of probabilistic solvers \citep{schober2019probabilistic,bosch2021calibrated}.
The reason is that it drastically lessens the resolution needed for solving challenging problems.
For example, for \citeauthor{van1920theory}'s \citeyearpar{van1920theory} system,
\begin{align}
\label{equation-van-der-pol}
    u''(t) = 10^3 (u'(t) (1 - u(t)^2) - u(t)),
\end{align}
adaptivity reduces the number of steps, thus the solver's memory footprint and general ``amount of work'', from almost 750,000 to under 3,000 grid points, while achieving similar accuracy (\Cref{figure-van-der-pol-steps}).
\begin{figure}[t]
    \begin{center}
        \includegraphics[width=0.95\linewidth]{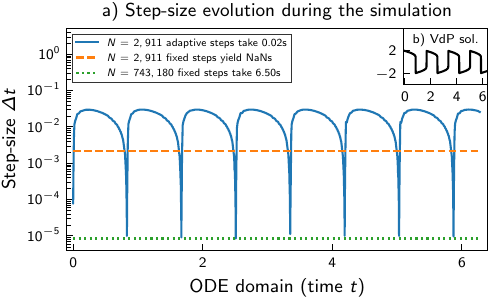}
    \end{center}
    \caption{
    \begin{sansmath}
        {Adaptive step-size selection minimises the amount of work for solving (stiff) differential equations (a)}.
        Van-der-Pol system (solution in b), tolerance $0.001$.
        One adaptive versus two fixed-grid solvers: one matches the adaptive solvers' number of steps (and fails), and one matches its accuracy (but requires over $250\times$ more memory \& runtime).
        Detailed setup: \Cref{appendix-setup-figure-van-der-pol-steps}.
        \end{sansmath}
    }
    \label{figure-van-der-pol-steps}
\end{figure}
Furthermore, this $250\times$ memory improvement comes with a proportional runtime reduction, but we mainly care about memory in this work because many simulations, for instance, those in \Cref{section-experiment-overcoming-memory-limitations}, are memory-limited and not runtime-limited.
Our work shows that one can decrease memory requirements without increasing runtime. In practice, JAX-code actually becomes faster since just-in-time (JIT) compilation is now possible; more on this later.

In other words, adaptive step-size selection is critical for decreasing the solvers' memory demands without losing accuracy.
\emph{However, it doesn't reduce them enough:}
The issue is that adaptive solvers represent the solution with all points used for the computation, regardless of whether or not only a tiny subset of these points is relevant to what we plan to do with the solution.
For example, if we were to estimate an unknown initial condition of the van-der-Pol system (\Cref{equation-van-der-pol}) from a single observation in the middle of the time domain, we would still have to store all $\approx 3,000$ states used for the simulation.
This excessive storage is problematic:

First, it consumes more memory than necessary, potentially too much for the simulation to be feasible.
    \textit{Our work enables high-precision simulations of challenging systems on a small laptop, which exceeds prior works' memory capabilities.}
    \Cref{section-experiment-overcoming-memory-limitations} shows an example. 
    
Second, not knowing the memory footprint of the simulation complicates using programming frameworks that must know the exact memory demands of a program before compiling/running it, for example, JAX \citep{jax2018github}.
    \textit{Our work enables JIT-compiling JAX-code for adaptive probabilistic solvers.} 
    \Cref{section-experiment-rigid-body} shows how this drastically improves performance.

To illustrate the second problem: until the present work, existing JAX-based implementations, for instance this\footnote{\texttt{https://github.com/pnkraemer/tornadox}} one or this\footnote{\texttt{https://github.com/mlysy/rodeo}} one, either use fixed steps, adaptively simulate only the terminal value (ignoring the interior of the time domain) or can't compile the solution routines.
Our work doesn't suffer from any of these problems.
An open-source JAX implementation of our method is available under this\footnote{\LibraryLink{}} link; install it via
\LibraryInstall{}.
The lack of JIT'able adaptive solvers in JAX was the original motivation for the research presented in here.

\section{Problem Statement}
\label{section-problem-statement}

To summarise, the present work solves a practical problem with probabilistic differential equations solvers:
\begin{problem}
{We aim to solve a differential equation adaptively, to a user-specified tolerance, and with memory requirements that are fixed, independent of the time-stepping, and known \underline{before} the simulation.}
\end{problem}
We call this process of adaptively generating the solution at a pre-specified set of target points ``adaptive target simulation,'' as opposed to what we call ``adaptive simulation:'' returning the adaptive solution at all locations used for the approximation.
We could combine adaptive simulation with interpolation to achieve adaptive target simulation. 
Still, this combination would inherit all the memory-related problems from adaptive simulation, and our goal is to avoid them.
Interpolation is thus not the correct answer;
and using what often works for traditional solvers isn't either, as follows.

Adaptive target simulation for non-probabilistic solvers, like Runge--Kutta methods, usually reduces to a decision of whether or not to save the current step.
Past and future increments are rarely affected.
For example, passing a non-empty \textit{t\_eval} (the target locations) to SciPy's \textit{solve\_ivp} affects only very few lines of code: storing an interpolation instead of the time-step.\footnote{%
\texttt{https://docs.scipy.org/doc/scipy/reference/
generated/scipy.integrate.solve\_ivp.html}}
However, the case for probabilistic solvers is more complicated because they operate through a forward- and a backward-pass, where the backward-pass depends on all intermediate results of the forward pass \citep[e.g.][]{kramer2024implementing}.
Because of this dependence, all intermediate states have to be saved if we use existing formulations of adaptive solvers.
For reference, the example in \Cref{figure-van-der-pol-steps} uses approximately 3,000 adaptive steps, and all of those need to be saved to parametrise the posterior distribution.
However, storing even 3,000 locations may not be feasible for high-dimensional problems like discretised partial differential equations (\Cref{section-experiment-overcoming-memory-limitations}).

In this work, we show that storing all intermediate steps is unnecessary.
We modify the solver recursion so that the backward pass only depends on the solution's values at all target locations instead of those at compute locations.
More technically, we extend recent advances in numerically robust fixed-point smoothing \citep{kramer2024numerically} to simulating differential equations and then combine them with adaptive time-stepping.
The result is a previously unknown implementation of adaptive probabilistic ODE solvers. 
However, existing codes need not be trashed:
We can update them by tracking a single additional variable at each stage of the forward pass.
The rest of this paper details how.

\section{Method: Adaptive Target Simulation via Fixed-Point Smoothing}
\label{section-method}

\subsection{Background on Probabilistic Solvers}

This work discusses solving ordinary-differential-equation-based initial value problems (ODEs).
For example, the SIR model \citep{kermack1927contribution} is an ODE that describes a disease outbreak, starting at initial case counts and evolving according to specific nonlinear dynamics.
More specifically, let $f: \Rbb^d \rightarrow \Rbb^d$ be a known, potentially nonlinear vector field and $u_0 \in \Rbb^d$ be a known vector.
Consider the ODE
\begin{align}
    \label{equation-ode}
    \oderiv{} u(t) = f(u(t)), \quad t \in [0, 1], \quad u(0) = u_0,
\end{align}
and assume that it admits a unique, sufficiently regular solution.
The precise form of \Cref{equation-ode} does not matter much; second derivatives or time-varying dynamics only need minimal changes, and the general template continues to apply \citep{bosch2022pick}.
In other words, we use \Cref{equation-ode} to explain the algorithm but it applies to general ODEs via \citet{bosch2022pick}.
For instance, the simulation in \Cref{figure-van-der-pol-steps} involves second derivatives.

Numerical solution of ODEs requires approximation because the vector field $f$ is nonlinear.
Approximation requires time-discretisation:
Introduce the point sets
\begin{subequations}
    \begin{align}
        \text{``Compute grid'':} &
                                 &
        t_{0:N}
                                 & \coloneqq
        \{t_0, ..., t_N\},
        \\
        \text{``Target grid'':}  &
                                 &
        s_{0:M}
                                 & \coloneqq
        \{s_0, ..., s_M\}.
    \end{align}
\end{subequations}
The target grid $s_{0:M}$ is specified by the user, and the compute grid $t_{0:N}$ selected by the algorithm.
No technical relation exists between $t_{0:N}$ and $s_{0:M}$, but we optimise our method for $M \ll N$.
Without a loss of generality, assume both grids' endpoints coincide with the domain's boundary, $t_0 = s_0 = 0$ and $t_N = s_M = 1$. If not, augment the point sets.
Treating the compute- and the target-grid differently is central to this work.

Probabilistic numerical solvers condition a prior distribution (usually Gaussian) on satisfying the ODE on the compute-grid $t_{0:N}$.
Let $u$ be a Gaussian process that admits $L$ or more derivatives (in the mean-square sense, like it's typical in the Gaussian process literature \citep{williams2006gaussian}), and assume that $L$ is at least as large as the highest derivative in the ODE (\Cref{equation-ode}).
For example, ODEs involving first derivatives need $L \geq 1$ which is satisfied by, for example, a \Matern{}$\left(\tfrac{3}{2}\right)$ or a \Matern{}$\left(\tfrac{5}{2}\right)$ process.
For ODEs involving second derivatives, the \Matern{}$\left(\tfrac{3}{2}\right)$ would not be regular enough.
$L$ can be thought of as the order of the solver; the higher $L$ is (with everything else being equal), the more accurate the approximation, but also, the more expensive and ill-conditioned the solution process becomes \citep{kramer2024stable}.

Since differentiation is linear,
\begin{align}
    \ufrak \coloneqq \left[u, \oderiv{} u, \oderiv{2} u, ... \oderiv{L}u\right]
\end{align}
is a Gaussian process.
Abbreviate the ODE residual
\begin{align}
    \rfrak(t) \coloneqq \oderiv{} u(t) - f(u(t)).
\end{align}
This $\rfrak$ is a nonlinear function of $\ufrak$; recall that $\ufrak$ contains $\oderiv{} u$.
A globally small $\rfrak$ implies that a given $u$ approximates the ODE solution accurately.
Adopting \citeauthor{kramer2024implementing}'s \citeyearpar{kramer2024implementing} terminology, we call any approximation of %
\begin{align}
    \label{equation-probabilistic-ode-solution}
    p(\ufrak(s_{0:M}) \mid \rfrak(t_{0:N})=0, u_0)
\end{align}
a ``probabilistic ODE solution''.
Any algorithm that computes such a probabilistic solution shall be a ``probabilistic solver''.
For the rest of this paper, we drop the ``probabilistic'' and only write ``solver/solution'' because all methods will be of this type.
Recall how $t_{0:N}$ is selected adaptively during the forward pass, whereas the user chooses $s_{0:M}$.
\textit{Approximating the solution in \Cref{equation-probabilistic-ode-solution} with memory-complexity dictated by $s_{0:M}$ and not by $t_{0:N}$ has been previously unknown and is the main contribution of this paper.}
More technically, we now introduce a method with $\Ocal(N+M)$ runtime and $\Ocal(M)$ memory, as opposed to the $\Ocal(N+M)$ runtime and $\Ocal(N+M)$ memory of existing approaches.

\subsection{The Method}
A sequential implementation of the ODE solver needs a Markovian prior; concretely, it needs \Cref{assumption-gauss-markov}:
\begin{assumption}
\label{assumption-gauss-markov}
Assume that $\ufrak$ has the Markov property and that the transition density is
\begin{align}
p(\ufrak(t + \Delta t) \mid \ufrak(t)) = N(\Phi(\Delta t) \ufrak(t), \Sigma(\Delta t))
\end{align}
with known matrices $\Phi(\Delta t)$ and $\Sigma(\Delta t)$.
\end{assumption}
\Cref{assumption-gauss-markov} is not restrictive if we compare with earlier work on selecting priors \citep{schober2019probabilistic,bosch2024probabilistic,kramer2022million}. 
All Gaussian processes that solve linear, time-invariant stochastic differential equations, for example, \Matern{} or integrated Wiener processes, satisfy it \citep{sarkka2019applied}.

With \Cref{assumption-gauss-markov}, proceed in two steps:
First, \Cref{section-two-targets} derives an $\Ocal(1)$-memory solver assuming two target points, $s_{0:M} = \{0, 1\}$, instead of arbitrarily many.
The $\Ocal(1)$ memory is critical, and novel in this context.
The resulting procedure will serve as the foundation for the general case.
Second, \Cref{section-many-targets} extends the $\Ocal(1)$-memory code for two to an $\Ocal(M)$-memory version for many targets by calling the two-target code repeatedly and in a specific way.
The resulting method then solves the problem stated in \Cref{section-problem-statement}.

\subsubsection{Two Target Points}
\label{section-two-targets}
For now, assume $s_{0:M} = \{0, 1\}$ and recall that we assume that the compute grid contains both $t_0 = 0$ and $t_N = 1$.
A sequential, in-place algorithm for estimating
\begin{align}
\label{equation-ode-solution-two-targets}
\begin{split}
&p(\ufrak (0), \ufrak(1) \mid \rfrak(t_{0:N}), u_0)
\\
&
=
p(\ufrak (0) \mid \ufrak(1), \rfrak(t_{0:N}), u_0)
p(\ufrak (1) \mid \rfrak(t_{0:N}), u_0).
\end{split}
\end{align}
arises as follows.
Marginalising over $\ufrak(t_{N-1})$ and using Markovianity turns the first term in \Cref{equation-ode-solution-two-targets} into
\begin{subequations}
\begin{align}
&p(\ufrak(0) \mid \ufrak(1), \rfrak(t_{0:N}), u_0)
\notag
\\
&
=
\int p(\ufrak (0), \ufrak(t_{N-1}) \mid \ufrak(1), \rfrak(t_{0:N}), u_0) \diff \ufrak(t_{N-1})
\\
&
=
\int 
    \pi_0(\ufrak(t_{N-1})) \pi_1(\ufrak(t_{N-1}))
\diff \ufrak(t_{N-1}),
\label{equation-detour-via-tn-temporary-variables}
\end{align}
\end{subequations}
where \Cref{equation-detour-via-tn-temporary-variables} uses the temporary variables
\begin{subequations}
\begin{align}
\pi_0(\ufrak(t_{N-1}))
&\coloneqq
p(\ufrak(0) \mid \ufrak(t_{N-1}), \rfrak(t_{0:N-1}), u_0)
\\
\pi_1(\ufrak(t_{N-1}))
&\coloneqq
p(\ufrak(t_{N-1}) \mid \ufrak(1),  \rfrak(t_{0:N-1}), u_0)
\end{align}
\end{subequations}
to simplify the notation.
By applying the same idea to $\pi_0(\ufrak(t_{N-1}))$, namely stepping through $\ufrak(t_{N-2})$ and using Markovianity, and repeating the approach for all remaining points in $t_{1:N-1}$ (backwards over the indices), we obtain the marginalised, backwards factorisation
\begin{align}
\label{equation-markov-factorisation-conditional}
&p(\ufrak(0) \mid \ufrak(1), \rfrak(t_{0:N}), u_0)
\\
&=
\int
\left(
\prod_{n=0}^{N-1}
    p(\ufrak(t_{n}) \mid \ufrak(t_{n+1}), \rfrak(t_{0:n}), u_0)
\right)
\diff \ufrak(t_{1:N-1})
.
\notag
\end{align}
We make three critical observations:

First, the parametrisation of each conditional factor $p(\ufrak(t_n) \mid \ufrak(t_{n+1}), \rfrak(t_{0:n}), u_0) $ in \Cref{equation-markov-factorisation-conditional}  
 depends on that of $p(\ufrak(t_n) \mid \rfrak(t_{0:n}), u_0)$ and the transition density of the prior. The latter is known due to \Cref{assumption-gauss-markov}.
In other words, each factor in \Cref{equation-markov-factorisation-conditional} can be parametrised as a byproduct of the $n$-th step of the forward pass of the ODE solver.
This insight is critical but not new \citep[e.g.,][Equation 3.15]{kramer2024implementing}.

Second, we can compute \Cref{equation-markov-factorisation-conditional} in $\Ocal(1)$ memory and $\Ocal(N)$ runtime as long as we marginalise in the correct order, which is the following:
for $n=0$, initialise 
\begin{align}
\label{equation-marginal-initialisation}
p(\ufrak(0) \mid \ufrak(0), \rfrak(t_0), u_0) = \delta(\ufrak(0)).
\end{align}
Store the Dirac delta $\delta$ as a Gaussian with zero covariance to request the correct amount of storage for later steps. 
To simplify the upcoming notation, define the following transitions for $k=0, ..., n-1$,
\begin{align}
\rho(\ufrak(t_{k}), \ufrak(t_n))
\coloneqq
p(\ufrak(t_k) \mid \ufrak(t_{n}), \rfrak(t_{0:n}), u_0).
\end{align}
The start in \Cref{equation-marginal-initialisation} parametrised $\rho(\ufrak(0), \ufrak(0))$.
After initialising, iterate for $n=0, ..., N-1$,
\begin{align}
\begin{split}
\label{equation-merge-conditionals}
&
\rho(\ufrak(0), \ufrak(t_{n+1}))
\\
&~=
\int 
\rho(\ufrak(0), \ufrak(t_n))
\rho(\ufrak(t_{n}), \ufrak(t_{n+1}))
\diff \ufrak(t_n).
\end{split}
\end{align}
Iterating \Cref{equation-merge-conditionals} consumes $\Ocal(1)$ memory because storage for the parameters of $\rho$ can be reused.
This in-place behaviour is critical to an $\Ocal(1)$ memory code and novel for probabilistic ODE solvers.
\Cref{equation-merge-conditionals} holds generally; \Cref{assumption-gauss-markov} implies that it is straightforward to evaluate (discussed next).

Third, the marginalisation in \Cref{equation-merge-conditionals} is possible in closed form because $\rho$ is always an affine Gaussian transformation under \Cref{assumption-gauss-markov}.
\Cref{appendix-section-robust-marginalisation} discusses a numerically robust implementation, mimicking \citeauthor{kramer2024numerically}'s \citeyearpar{kramer2024numerically} fixed-point smoother. We revisit the connection to fixed-point smoothing below.

\Cref{algorithm-two-target-points} summarises the procedure.
\begin{algorithm*}[t]
\caption{%
    \begin{sansmath}
    {Adaptive simulation in $\Ocal(1)$ memory.}
    ``Predict'' \& ``step'' as usual.
    Marginalisation: \Cref{appendix-section-robust-marginalisation}.
    \end{sansmath}
}
\label{algorithm-two-target-points}
\begin{algorithmic}[1]
\Require $\left[p(\ufrak(a)), a\right]$ to start interpolation, $\left[ p(\ufrak(t)), t\right]$ to start time-stepping, $p(\ufrak(a) \mid \ufrak(t))$ (which could be the identity), initial step-size $\Delta t$, right-hand side boundary $b \in (a, 1]$. 
\Ensure $a \leq t$ \Comment{Both $t \leq b$ and $b\leq t$ are possible.}
\If{$b \in (a, t)$}
    \Comment{Early exit in case no time-stepping is necessary}
    \State $p(\ufrak(b)), p(\ufrak(a) \mid \ufrak(b))  = \text{predict}(p(\ufrak(a)), b-a)$
    \Comment{See \citeauthor{kramer2024implementing}'s \citeyearpar{kramer2024implementing} summary}
    \State $ \_\_, p(\ufrak(b) \mid \ufrak(t)) = \text{predict}(p(\ufrak(b)), t-b)$
    \Comment{Ignore one output}
\Else
    \Comment{$b$ must be at least as large as $t$, so we start time-stepping}
    \State $t_\text{prev} \leftarrow a$
    \Comment{Carry two intervals: $(t_\text{prev}, t]$ \& $(t, t_\text{next}]$}
    \State $p(\ufrak(a) \mid \ufrak(t_\text{prev}) \leftarrow N(\ufrak(t_\text{prev}), 0)$
    \While{$t < b$}
        \State $p(\ufrak(t_\text{next})), p(\ufrak(t) \mid \ufrak(t_\text{next})), \Delta t = \text{step}(p(\ufrak(t)), \Delta t)$
        \Comment{See \citeauthor{kramer2024implementing}'s \citeyearpar{kramer2024implementing} summary}
        \State $p(\ufrak(a) \mid \ufrak(t)) \leftarrow \int p(\ufrak(a) \mid \ufrak(t_\text{prev})) p(\ufrak(t_\text{prev}) \mid \ufrak(t))\diff \ufrak(t_\text{prev})$
        \Comment{new!}
        \State $(t_\text{prev}, t) \leftarrow (t, t_\text{next})$
        \EndWhile
    \State $p(\ufrak(b)), p(\ufrak(t_\text{prev}) \mid \ufrak(b)) = \text{predict}(p(\ufrak(t_\text{prev})), b-t_\text{prev})$
    \State $p(\ufrak(a) \mid \ufrak(b)) \leftarrow \int p(\ufrak(a) \mid \ufrak(t_\text{prev})) p(\ufrak(t_\text{prev}) \mid \ufrak(b))\diff \ufrak(t_\text{prev})$
    \Comment{new!}
    \State $\_\_, p(\ufrak(b) \mid \ufrak(t)) = \text{predict}(p(\ufrak(b)), t-b)$
\EndIf
\State
\Return $\left\{p(\ufrak(a) \mid \ufrak(b)), p(\ufrak(b)) \right\}$, $\left\{p(\ufrak(b) \mid \ufrak(t)), p(\ufrak(t))\right\}$, $\Delta t$
\end{algorithmic}
\end{algorithm*}
Note how it slightly deviates from the above derivation:
It solves from $a \in [0, 1]$ to $b \in (a, 1]$ instead of from $0$ to $1$.
It also contains an initial block that checks whether time-stepping is necessary.
Both are important when we solve for many target points instead of two, which will become clear in \Cref{section-many-targets}.
``Predict'' and ``step'', including generating $\Delta t$, match \citet{bosch2021calibrated,kramer2024stable}. 
Thus, we omit the details and refer to \citeauthor{kramer2024implementing}'s \citeyearpar{kramer2024implementing} tutorial instead. 
The marginalisation of the conditionals is new, and using it for constant-memory ODE solvers our contribution. 
\Cref{algorithm-two-target-points} distils creating $\Ocal(1)$ memory ODE solvers into, essentially, a single line of code.
This distillation is helpful since we can update published solvers without rewriting existing, sophisticated solution routines.

\Cref{algorithm-two-target-points} generalises \citeauthor{kramer2024numerically}'s \citeyearpar{kramer2024numerically} fixed-point smoother:
Namely, if all compute-grid-points $t_{0:N}$ were known in advance, the ODE were affine, and if we followed \Cref{algorithm-two-target-points} up with one last marginalisation that yields $p(\ufrak(0) \mid \rfrak(t_{0:N}), u_0)$, we would recover  \citeauthor{kramer2024numerically}'s \citeyearpar{kramer2024numerically}  fixed-point smoother.
However, our method is more general: our grid points need not be known in advance, the ODE is nonlinear, and we don't apply this final marginalisation step because we need the full  $p(\ufrak(0), \ufrak(1) \mid \rfrak(t_{0:N}), u_0)$ to solve for many targets.

\subsubsection{Many Target Points}
\label{section-many-targets}
With \Cref{algorithm-two-target-points} in place, solving for multiple target points $s_{0:M}$ reduces to calling it repeatedly, almost like we would do for non-probabilistic solvers.
Let $\Ical_{[a, b]}$ be the restriction of a set to the interval $[a, b]$; for example, $\Ical_{[2, 4]}(\{1, 2, 3, 4, 5\}) = \{2, 3, 4\}$.
We use $\Ical$ to split the compute grid into chunks corresponding to the targets,
\begin{align}
t_{0:N}
=
\bigcup_{n=0}^{N-1}\Ical_{[0, s_{n+1}]}(t_{0:N}).
\end{align}
Combining this separation with the Markov property 
leads to a sequential factorisation of the ODE solution over many target points instead of two, which reads
\begin{align}
\label{equation-sequential-factorisation-many-targets}
\begin{split}
&p(\ufrak(s_{0:M}) \mid \ufrak(t_{0:N}), u_0)
\\
&\quad\quad\quad
= p(\ufrak(s_M) \mid \ufrak(t_{0:N}), u_0)
\prod_{m=0}^{M-1} \xi_m.
\end{split}
\end{align}
Recall $t_N = s_M$.
The temporary variables 
\begin{align}
\xi_m \coloneqq p(\ufrak(s_{m}) \mid \ufrak(s_{m+1}), \rfrak( \Ical_{[s_0, s_m]}(t_{0:N}) ), u_0)
\end{align}
serve notational simplicity and won't be needed after \Cref{equation-sequential-factorisation-many-targets}.
Sequential factorisation is important because it reduces a difficult problem (many target points) into a sequence of simpler problems (two target points).
As a result, we solve the many-target problem with repeated calls to \Cref{algorithm-two-target-points} (recall it uses $a$ and $b$): 
\begin{enumerate}
\item Set $a=t=s_0$ and $p(\ufrak(a))=p(\ufrak(s_0) \mid u_0, \rfrak(s_0))$. Initialise $p(\ufrak(a) \mid \ufrak(t)) = \delta(\ufrak(t))$ but implement this as a Gaussian with zero covariance to allocate the correct memory. Choose a $\Delta t$. 
\item 
Assign $b=s_1$ and execute \Cref{algorithm-two-target-points}. 
Save  \mbox{$p(\ufrak(s_0) \mid \ufrak(s_1), \Ical_{[s_0, s_1]}(t_{0:N}), u_0)$}.
Set $a=s_1$ and $b=s_2$; call \Cref{algorithm-two-target-points} with the other outputs.
\item Repeat until $s_{0:M}$ has been exhausted. Additionally store $p(\ufrak(s_M) \mid \rfrak(t_{0:N}), u_0)$ and discard everything not yet stored. 
This gives \Cref{equation-sequential-factorisation-many-targets}.
\item 
Use \Cref{equation-sequential-factorisation-many-targets} to compute marginals or sample from the ODE solution \citep{kramer2024implementing} in linear time or use it for parameter estimation \citep{tronarp2022fenrir}.
\end{enumerate}
\begin{theorem}
\label{theorem-complexity-estimate}
The above procedure executes adaptive target simulation in $\Ocal(N+M)$ runtime and $\Ocal(M)$ memory. The returned values match those we would obtain through adaptive simulation and interpolation.
\end{theorem}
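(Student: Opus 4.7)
The plan is to prove the two claims separately: first the complexity bounds, and then the equivalence with adaptive simulation plus interpolation. For both, I would exploit the fact that the procedure is defined purely through $M$ successive calls to \Cref{algorithm-two-target-points}, so that it suffices to aggregate per-call costs and compose per-call posteriors.

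For the runtime bound, I would observe that one call to \Cref{algorithm-two-target-points} with endpoints $(a, b) = (s_m, s_{m+1})$ performs a constant amount of boundary work (one ``predict'' at each end, plus one closing marginalisation) together with a while-loop whose body runs exactly once per compute-grid point in $\Ical_{[s_m, s_{m+1}]}(t_{0:N})$, each iteration doing $\Ocal(1)$ work (one ``step'', one marginalisation, and one pointer update). Because the chunks $\Ical_{[s_m, s_{m+1}]}(t_{0:N})$ partition $t_{0:N}$, summing over $m$ gives $\Ocal(N)$ for the loop bodies plus $\Ocal(M)$ for the boundary work, hence $\Ocal(N+M)$ total. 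For the memory bound, I would argue inductively that after the $m$-th call only the $m$ already-completed factors $p(\ufrak(s_k) \mid \ufrak(s_{k+1}), \ldots)$ need to persist, each of fixed per-target size, while the ongoing call uses only $\Ocal(1)$ working storage because \Cref{equation-merge-conditionals} updates the running conditional $\rho$ in place; together this yields $\Ocal(M)$.

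For correctness, I would invoke the derivation in \Cref{section-two-targets}, which establishes that \Cref{algorithm-two-target-points} called with endpoints $(s_m, s_{m+1})$ returns an exact parametrisation of $\xi_m = p(\ufrak(s_m) \mid \ufrak(s_{m+1}), \rfrak(\Ical_{[s_0, s_m]}(t_{0:N})), u_0)$ together with the marginal needed to seed the next call. Chaining the $M$ factors with the final marginal $p(\ufrak(s_M) \mid \rfrak(t_{0:N}), u_0)$ reconstructs \Cref{equation-sequential-factorisation-many-targets}, which is precisely the joint posterior one would obtain by running an adaptive simulation on $t_{0:N}$ and then marginalising out the non-target compute points---that is, by interpolating at $s_{0:M}$.

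The main obstacle, and the only step that requires care, is justifying that the adaptive compute grid $t_{0:N}$ generated by the $M$ sequential calls is identical to the grid a standard adaptive simulation would produce on the same problem. Because each invocation of \Cref{algorithm-two-target-points} propagates the forward filtering marginal $p(\ufrak(t))$ and the step size $\Delta t$ unchanged from one call to the next, and because the step-size controller in ``step'' depends only on these quantities (the target-conditional $p(\ufrak(a)\mid\ufrak(t))$ is passive), grid selection is unaffected by the fixed-point-smoothing bookkeeping. Granting this identity of grids, the joint posteriors agree exactly and the theorem follows.
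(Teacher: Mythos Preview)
Your proposal is correct and follows the same approach as the paper's proof: the complexity bounds come from aggregating the $\Ocal(1)$-memory, per-point cost of \Cref{algorithm-two-target-points} over $M$ calls that together visit each point of $t_{0:N}$ and $s_{0:M}$ once, and the equivalence follows because the only addition to the standard forward pass is the passive conditional-merging step, so the filtering marginal, step size, and hence the adaptive grid are unaffected. Your write-up is in fact more explicit than the paper's, particularly in spelling out the partition argument for runtime and in isolating the ``grid identity'' obstacle, but the underlying ideas coincide.
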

\begin{proof}
The $\Ocal(M)$ memory holds by construction because \Cref{algorithm-two-target-points} has $\Ocal(1)$ memory, and we call it precisely $M$ times.
The $\Ocal(N+M)$ runtime holds because every point in $t_{0:N}$ and $s_{0:M}$ is visited once.

The match with adaptive simulation is true because the only difference to adaptive simulation codes is the ``merging'' of the conditionals step at every stage of each loop in \Cref{algorithm-two-target-points}.
Tracking $p(\ufrak(t))$ and $p(\ufrak(a))$ separately is essential for adaptive target simulation not affecting the error control behaviour.
\end{proof}

\section{Related Work}
\label{section-related-work}

Our primary objective is to reduce the memory footprint of adaptive probabilistic solvers. Thus, the works by \citet{bosch2021calibrated} and \citet{schober2019probabilistic} are most closely related because they introduce the current formulation of the adaptive algorithm.
Unlike their technique, ours has constant memory requirements.

Our effort follows recent improvements to implementing Gaussian-process-based solvers \citep{kramer2024stable,kramer2022million,bosch2024parallel,kramer2021linear} by reformulating existing algorithms in a way that makes them more efficient or robust but without affecting their return values.
We are the first to target memory efficiency and extend \citeauthor{kramer2024numerically}'s \citeyearpar{kramer2024numerically} fixed-point smoother to get there (\Cref{section-two-targets}).

The articles by \citet{bosch2022pick} and \citet{bosch2024probabilistic}, as well as \citeauthor{kramer2022pnmol}'s \citeyearpar{kramer2022pnmol} partial-differential-equation algorithm are related as work on (broadly) ODE methods, but different because they build new solvers instead of making existing ones more efficient as our work does.
Studies like those by \citet{tronarp2022fenrir,kersting2020differentiable,beck2024diffusion,wu2024data} also relate through the ODE-solver connection.
However, those three works focus on applications, which is a different objective than ours.
That said, our method can be used for all of the above.

Adaptive target simulation loosely relates to ``checkpointing'' for reverse-mode derivatives of ordinary differential equations \citep[e.g.][]{griewank2000algorithm} in the sense that both approaches attempt to reduce the memory footprint of a forward-backwards pass where the backward pass depends on all intermediate values of the forward pass. 
However, beyond this similarity, the relation of our work to checkpointing is minimal.

\section{Experiments}
\label{section-experiments}

\Cref{theorem-complexity-estimate} proves that \Cref{algorithm-two-target-points} solves the problem stated in \Cref{section-problem-statement}, by implementing adaptive probabilistic ODE solvers in constant memory. 
Next, a series of experiments demonstrates how these promises translate to better performance in memory and runtime.

\paragraph{Hardware \& software setup}
All experiments run on the CPU of a consumer-level laptop with $8$ GB of RAM.
We use a small machine to demonstrate the memory efficiency of our tools:
For example, \Cref{section-experiment-overcoming-memory-limitations} uses this 8 GB of RAM to run a simulation that would require many hundreds of GB of memory with existing methods.  
Everything is implemented in JAX \citep{jax2018github} and open-sourced under this\footnote{\LibraryLink{}} link.
The simulations use double precision instead of JAX's single precision default because tolerances of $10^{-8}$ and smaller are common.
Runge--Kutta codes are imported from Diffrax \citep{kidger2021on}, not from SciPy \citep{virtanen2020scipy} because prior benchmarks have demonstrated how JAX implementations of probabilistic and non-probabilistic solvers are usually faster than SciPy \citep{kramer2024implementing}, likely due to JIT-compilation.
The latter plays a part in our experiments, too.

\subsection{Memory (and Runtime) vs. Accuracy}
\label{section-experiment-rigid-body}

\subsubsection{Motivation}
\Cref{section-method} proved drastic memory savings, and the introduction announced substantial runtime improvements. 
But do both manifest in actual simulations?
The first experiment illustrates what users can expect by switching to our fixed-point-smoother-based code.
More specifically, we choose a popular ODE benchmark and compare the memory and runtime of adaptive target simulation with that of the combination of adaptive simulation and interpolation.
The results of such a demonstration indicate which gains to expect from our proposals.
\begin{figure*}[t]
    \begin{center}
        \includegraphics{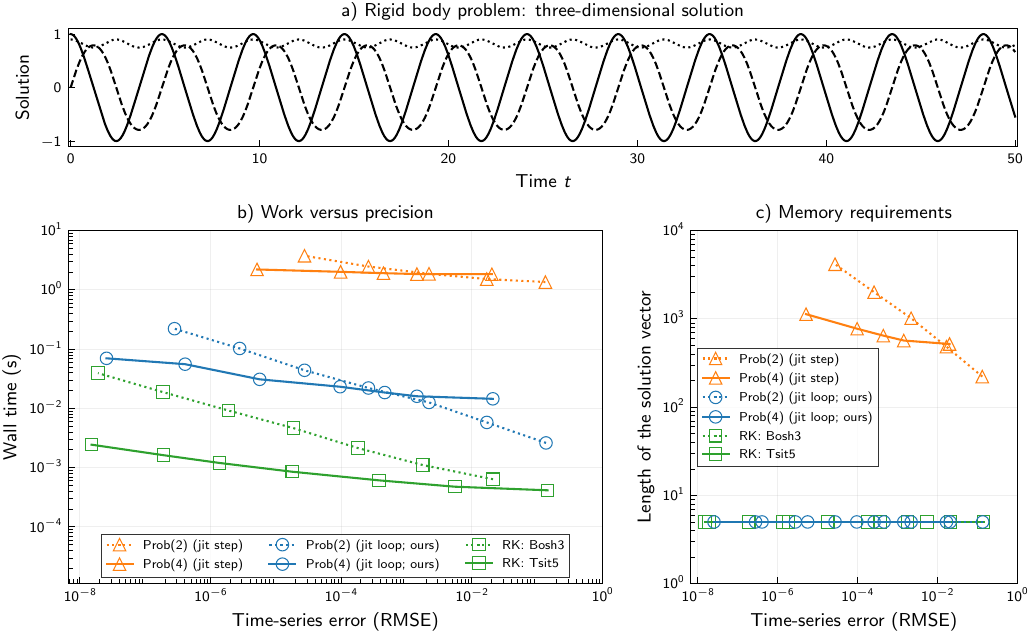}
    \end{center}
    \caption{%
    \begin{sansmath}
        \textit{\nameref*{section-experiment-rigid-body}:}
        Solution of the rigid-body problem (a).
        Wall time (b) and memory footprint (c) versus root-mean-square error for probabilistic solvers via adaptive target simulation (orange triangles) and adaptive simulation (blue circles), as well as Runge--Kutta methods (green squares).
        The probabilistic solvers use $L=2$ and $L=4$ derivatives, and Diffrax provides the Runge--Kutta methods.
        We choose ``Bosh3'' and ``Tsit5'' since their error decays similarly to that of the selected probabilistic solvers.
    \end{sansmath}
    }
    \label{figure-experiment-rigid-body}
\end{figure*}

\subsubsection{Setup}
We choose a simple but popular ODE problem,
the rigid body problem \citep[p. 244]{hairer1993solving},
which describes the rotation of a rigid body in three-dimensional, principal, orthogonal coordinates.
We chose it because existing ODE solver benchmarks do \citep{bosch2024parallel,rackauckas2017differential,rackauckas2019confederated}.
This first experiment measures the memory demands, runtime, and accumulated root-mean-square error over $M=5$ uniformly spaced points for different tolerances.
The choice of $M$ does not affect the results as long as it exceeds $M=1$ and the target points cover the time domain evenly. 
Three codes are evaluated:
\begin{enumerate}
\item Probabilistic solver: adaptive target simulation (ours; end-to-end JIT'able, constant memory)
\item Runge--Kutta methods: adaptive target simulation (end-to-end JIT'able, constant memory)
\item Probabilistic solver: adaptive simulation (can only JIT single steps due to unpredictable memory)
\end{enumerate}
\Cref{appendix-section-setup-rigid-body} lists the full setup.
In this benchmark, we want fast solvers with constantly low memory requirements. 
The role of the Runge--Kutta methods is to provide context on the runtimes and memory requirements.
We don't expect our solvers to be faster than them because we propagate (Cholesky factors of) covariance matrices, and this built-in uncertainty quantification is not free; refer to \citet{kramer2024implementing,kramer2024stable,bosch2021calibrated,bosch2022pick,bosch2024probabilistic} for more. 
Still, we hope their performance will be similar.

\subsubsection{Analysis}
The results in \Cref{figure-experiment-rigid-body} express how, as expected, the memory demands of adaptive target simulation are constant.
Solvers that rely on adaptive simulation instead of adaptive target simulation store thousands of grid points for high-accuracy solutions.
Further, the ability to use just-in-time-compilation for adaptive target simulation substantially affects runtime: the probabilistic solvers' runtimes are closer to their Runge--Kutta relatives than to their implementations without adaptive target simulation.  
For example, the $L=4$ solver via adaptive target simulation needs less than 0.1 seconds to reach accuracy $\approx 10^{-8}$, whereas not using adaptive target simulation takes multiple seconds to reach error $10^{-1}$.
The remaining gap between probabilistic algorithms and Runge--Kutta methods is due to the costs of built-in uncertainty quantification, as anticipated above.
In summary, this experiment contrasts the efficiencies of adaptive target simulation and adaptive simulation.

\subsection{Overcoming Memory Limitations}
\label{section-experiment-overcoming-memory-limitations}
\begin{figure*}[t]
    \begin{center}
        \includegraphics{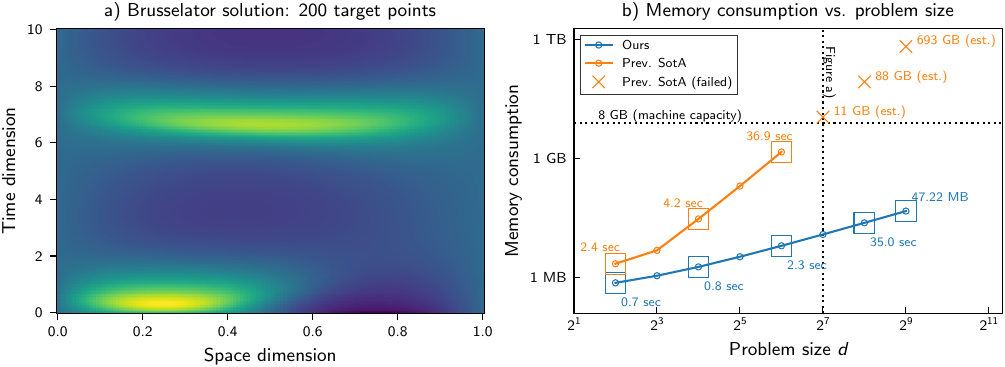}
    \end{center}
    \caption{%
        \begin{sansmath}
        \textit{\nameref*{section-experiment-overcoming-memory-limitations}:}
        Left: 
        Solution of the Brusselator problem.
        Right:
        The memory demands of the previous state-of-the-art (adaptive simulation) exceed the machine capacity somewhere between $d=2^6=64$ and $d=2^7=128$.
        Adaptive target simulation on $200$ points is far more memory efficient: at $d=512$, it consumes $47$ MB, in contrast to adaptive simulation's (estimated) $693$ GB.
        Runtime is never a bottleneck.
        \end{sansmath}
    }
    \label{figure-experiment-overcoming-memory-limitations}
\end{figure*}

\subsubsection{Motivation}
The experiment in \Cref{section-experiment-rigid-body} shows how adaptive target simulation is more memory efficient than adaptive simulation in combination with interpolation.
Next, we show that this efficiency unlocks large-scale simulations through substantial memory savings.
We solve a high-dimensional differential equation with high accuracy and investigate if our new implementation pushes the boundary of feasibility to new heights.
The results will provide intuition for which memory-challenged ODE problems were previously out of reach but can now be addressed probabilistically.

\subsubsection{Setup}
Not all ODE problems are limited by memory constraints, but many high-dimensional systems over long time intervals are.
As an example of such a system, we solve the Brusselator problem \citep{prigogine1968symmetry}, a time-dependent partial differential equation.
We discretise spatial derivatives with centred differences on an increasing number of points $d$.
The larger $d$,  the higher-dimensional the ODE, and ultimately, the harder the Brusselator becomes to solve \citep{wanner1996solving}.
We compare adaptive target simulation and adaptive simulation for increasing $d=\{2, 4, 8, ..., 512\}$, always using $200$ target points, tolerance $10^{-8}$, $L=4$ derivatives, and zeroth-order linearisation (even though the problem is stiff; why? See \Cref{appendix-section-setup-brusselator}).
We already know from \Cref{section-method,section-experiment-rigid-body} that adaptive target simulation is more memory efficient than adaptive simulation; this experiment investigates to which extent previously impossible simulations are now feasible.
Recall that all benchmarks use the CPU of a consumer-level laptop with $8$ GB of RAM.

\subsubsection{Analysis}
The results in \Cref{figure-experiment-overcoming-memory-limitations} show two phenomena: (i) runtime is never a bottleneck, not even on the finest resolutions, but memory is; and (ii) memory constraints limit adaptive simulation to resolutions below $d=64$ points in the space dimension, whereas adaptive target simulation can go up to $512$ points and likely higher. 
Concretely, for resolution $d=512$, the adaptive simulation would require $693$ GB of RAM, which is enormous for a single ODE simulation. Our code takes up less than 50 MB without sacrificing accuracy or runtime.
This contrast shows how a simulation that was previously impossible, even on large machines, can now be run in seconds on a small laptop.
Solving the Brusselator accurately is no longer out of reach.

\subsection{Further Experiments}
Our paper's primary objective is to reduce the memory footprint of probabilistic solvers; its secondary objective is efficient JAX code.
We are less interested in runtime differences, mainly because our contribution should not affect runtime beyond enabling JIT compilation.
Still, in the absence of memory limitations, users might care more about runtime than about memory, and there exist probabilistic numerics libraries that don't use JAX \citep{wenger2021probnum,bosch2024probnumdiffeq}.
\Cref{appendix-section-additional-experiments} contains more studies focusing on runtime to give insight into what happens in these two scenarios.
\begin{itemize}
    \item
    \Cref{appendix-section-experiment-pleiades} repeats the experiment in \Cref{section-experiment-rigid-body} for a more challenging ODE, the Pleiades problem.
    \item 
    \Cref{appendix-section-experiment-runtime-without-jit} discusses the runtime if we're not memory- or JIT-compilation-constrained, which suggests using our method in NumPy- or Julia- based libraries \citep{wenger2021probnum,bosch2024probnumdiffeq}.
\end{itemize}
Since memory is our main objective, these two experiments are relegated to the appendices.
\Cref{section-experiment-overcoming-memory-limitations,section-experiment-rigid-body} have already shown the superior memory requirements of adaptive target simulation.

\section{Discussion}

\subsection{Limitations}
Even though our method provably advances the memory efficiency of probabilistic solvers, it should be enjoyed with two caveats.
First, without memory limitations \emph{and} outside of JAX, ODEs can (and likely should) still be solved by combining adaptive simulation and interpolation, not with our method; \Cref{appendix-section-experiment-runtime-without-jit} discusses the nuances of this recommendation.
Second, even though our method technically applies to learning ODEs via the methods by  \citet{tronarp2022fenrir,beck2024diffusion,wu2024data}, marginal-likelihood optimisation with adaptive steps is currently unexplored.
However, we now have the tools to approach this research for decently sized problems.

\subsection{Conclusion}
This work explained how to implement the first adaptive probabilistic solver whose memory demands are fixed and chosen by the user before the simulation.
To get there, we combine Markovian priors (which are typical for probabilistic ODE solvers) with adaptive step sizes and the linear algebra of fixed-point smoothing.
The experiments have demonstrated how the memory demands of a Brusselator-simulation reduce from almost 700 GB to less than 50 MB without affecting the accuracy (\Cref{section-experiment-overcoming-memory-limitations}) and how JAX-based implementations gain orders of magnitude of runtime improvements (\Cref{section-experiment-rigid-body}).
For the future, this means that the utility of probabilistic ODE solvers once again continues to approach that of their non-probabilistic counterparts, thereby paving the way for probabilistic numerical methods in scientific machine learning.

\section*{Acknowledgements}

This work was supported by a research grant (42062) from VILLUM FONDEN. The work was partly funded by the Novo Nordisk Foundation through the Center for Basic Machine Learning Research in Life Science (NNF20OC0062606). This project received funding from the European Research Council (ERC) under the European Union’s Horizon programme (grant agreement 101125993).

\bibliography{references}

\appendix

\section*{Overview}
The following supplementary materials provide additional context for the results in the main paper.
This includes additional information on experiment setups (\Cref{appendix-setup-figure-van-der-pol-steps,appendix-section-setup-brusselator,appendix-section-setup-rigid-body}), implementation details for our method (\Cref{appendix-section-robust-marginalisation}), and two more experiments (\Cref{appendix-section-experiment-pleiades,appendix-section-experiment-runtime-without-jit}).
Like in the main paper, all experiments use the CPU of a laptop with 8 GB of RAM, implement code in JAX \citep{jax2018github}, and use double precision because tolerances below the floating-point accuracy of single precision are typical.

\section{Complete Setup: Figure 1}
\label{appendix-setup-figure-van-der-pol-steps}

As a differential equation, we choose \citeauthor{van1920theory}'s \citeyearpar{van1920theory} system, using \citeauthor{wanner1996solving}'s \citeyearpar{wanner1996solving} parametrisation, $u''(t) = 10^3 (u'(1- u^2) - u)$, $u_0 = 2, u_0' = 0$, from $t=0$ to $t=6.3$. This parametrisation is common for benchmarks \citep[e.g.,][]{bosch2021calibrated,kramer2024implementing}. 

To assemble a solver, we use $L=4$ derivatives, first-order linearisation (see \citet{tronarp2019probabilistic} for zeroth- versus first-order linearisation), a time-varying output-scale (calibrated during the forward pass; \citet{schober2019probabilistic,bosch2021calibrated}) and solve the ODE as is without transforming it into an ODE that only involves first derivatives \citep{bosch2022pick}. 
We don't factorise covariances \citep{kramer2022million} because the problem is scalar-valued.
Taylor-mode differentiation yields the initial state \citep{kramer2024stable}.

For time-stepping, we use tolerance $10^{-3}$ (both absolute and relative) in combination with proportional-integral-control \citep{gustafsson1988pi} and \citeauthor{schober2019probabilistic}'s \citeyearpar{schober2019probabilistic} error estimate.
\Cref{figure-van-der-pol-steps} shows step sizes from adaptive and fixed grids.
    The adaptive grid emerges from adaptive simulation, which means storing all intermediate steps of the forward pass (as opposed to adaptive target simulation, an implementation of which this paper contributes for probabilistic solvers). 
    This adaptive grid and the corresponding ODE solution are in \Cref{figure-van-der-pol-steps}.
    Here is how we generate the labels:

Assemble two non-adaptive grids by matching the number of steps and the smallest step size, respectively. 
Then, run a fixed-step solver on all three grids and measure the wall times. 
We use the same fixed-step solver for all grids to make the times comparable by factoring out JIT compilation as much as possible; see also \Cref{appendix-section-experiment-runtime-without-jit}.
The runtimes yield the labels in \Cref{figure-van-der-pol-steps}.

The results are plotted and discussed in \Cref{figure-van-der-pol-steps} in the main paper.

\section{Numerically Robust Marginalisation of Conditionals}
\label{appendix-section-robust-marginalisation}

For two Gaussian conditionals $\rho(x, y) = N(x; A y + a, B)$ and $\rho(y, z) = N(y; Cz + c, D)$, the marginalised conditional $\rho(x, z)$ from \Cref{equation-merge-conditionals} is
\begin{subequations}
\begin{align}
\rho(x, z) 
&= \int\rho(x, y) \rho(y, z) \diff y
\\
&= N(x; A C z + A c + a, ADA^\top + B)
\end{align} 
\end{subequations}
which results from the rules of manipulating Gaussian random variables.
To ensure numerical robustness, we must be careful with $A D A^\top + B$ to ensure symmetry and positive definiteness of all covariance matrices.

To this end, let $\sqrt{D}$ and $\sqrt{B}$ be the Cholesky factors of $D$ and $B$.
Then, we can compute the Cholesky factor of $A D A^\top + B$ only from $\sqrt{D}$ and $\sqrt{B}$, without ever assembling $D$ or $B$:
\begin{subequations}
\begin{align}
\text{(Thin-)QR-decompose}&
&
&QR 
= 
\begin{pmatrix}
\sqrt{D}A^\top
\\
\sqrt{B}^\top
\end{pmatrix},
\\
\text{then set}&
&
&\sqrt{A D A^\top + B}
= R^\top.
\end{align}
\end{subequations}
To see that this is a valid assignment, observe that $R^\top$ is lower triangular, and
\begin{subequations}
\begin{align}
A D A^\top + B
&=
\begin{pmatrix}
A \sqrt{D}
&
\sqrt{B}
\end{pmatrix}
\begin{pmatrix}
\sqrt{D}A^\top
\\
\sqrt{B}^\top
\end{pmatrix}
\\
&=
R^\top Q^\top Q R
\\
&= R^\top R.
\end{align}
\end{subequations}
As a result, $R^\top$ must be the Cholesky factor of $A D A^\top + B$.
The same QR decomposition is also used during the forward pass of the probabilistic solver; see \citet{kramer2024implementing,kramer2024stable}, and the linear algebra of \citeauthor{kramer2024numerically}'s \citeyearpar{kramer2024numerically} numerically robust fixed-point smoother.

\section{Complete Setup: Figure 2}
\label{appendix-section-setup-rigid-body}

As a benchmark ODE, we use the rigid-body problem \citep[page 244]{hairer1993solving}
\begin{subequations}
\begin{align}
\oderiv{} u_1(t)
&= -2 u_2(t) u_3(t), 
\\
\oderiv{} 
u_2(t) 
&= \frac{5}{4} u_1(t) u_3(t), 
\\
\oderiv{} u_3(t) 
&= -\frac{1}{2} u_1(t) u_2(t)
\end{align}
\end{subequations}
from $t=0$ to $t=50$, with the initial values $u_1(0) = 1$, $u_2(0) = 0$, and $u_3(0) = 0.9$.
We compute a solution with SciPy's implementation of LSODA \citep{petzold1983automatic} using tolerance $10^{-13}$ and plot it in \Cref{figure-experiment-rigid-body}.

For all solvers, we choose relative tolerances 
\begin{align}
\big\{10^{-3}, 10^{-4}, ..., 10^{-9}, 10^{-10}\big\}, 
\end{align}
discarding the high-precision third of this list for all solvers that don't use adaptive target simulation (because that would take too long to execute). 
The absolute tolerance is always $1,000\times$ smaller than the relative one.

The target grid consists of five equispaced points in the time domain.

For probabilistic solvers, we use zeroth-order linearisation, proportional-integral control, isotropic covariance factorisation, and a time-varying output scale. We initialise with Taylor-mode differentiation. We choose $L = 2$ and $L=4$.
We contrast adaptive target simulation with the combination of adaptive simulation and interpolation.
Adaptive target simulation can JIT-compile the full forward pass, but adaptive simulation can only compile a single solver step because the number of steps is unknown in advance.

For Runge--Kutta methods, we choose Bosh3 \citep{bogacki19893} and Tsit5 \citep{tsitouras2011runge} as offered by Diffrax \citep{kidger2021on} because they roughly match the error decay rate of the probabilistic solvers.
A high-accuracy reference solution comes from Dopri8 \citep{prince1981high} with tolerance $10^{-15}$.

As an error metric, we use the root-mean-square error. We always choose the best of three independent runs for the wall-time information because this choice estimates the method's efficiency without ``machine background noise''. Recall that all experiments run on the CPU of a small laptop.
The solution vector is the number of time steps in the computed solution. By construction, it is constant for adaptive target simulation. 

The results are plotted and discussed in \Cref{figure-experiment-rigid-body} in the main paper.

\section{Complete Setup: Figure 3}
\label{appendix-section-setup-brusselator}

As a test problem, we solve the Brusselator problem \citep{prigogine1968symmetry}
\begin{subequations}
\begin{align}
u(t, x) &= 1 + u(t,x)^2 v(t, x) - 4u + \frac{1}{50} \Delta u(t, x)
\\
v(t, x) &= 3u(t, x) - u(t,x)^2 v(t, x) + \frac{1}{50} \Delta v(t, x)
\end{align}
\end{subequations}
from $t=0$ to $t=10$, with $x \in [0, 1]$, and subject to the initial conditions
\begin{align}
u(0, x) =1+\sin(2\pi x)
,
\quad
v(0, x) = 3.
\end{align}
We discretise the Laplacian with centred differences on $d$ points, and vary $d \in \{2^1, 2^2, ..., 2^9\}$.
The larger $d$, the higher-dimensional the ODE, which is why, even for adaptive target simulation, the memory demands are not constant.
But also, the larger $d$, the stiffer the equation \citep{wanner1996solving}.

For all simulations, we use $L=4$ derivatives, tolerance $10^{-8}$, proportional-integral control \citep{gustafsson1988pi}, and a time-varying output scale \citep{schober2019probabilistic,bosch2021calibrated}.
As always, we initialise with Taylor-mode differentiation \citep{kramer2024stable}.

We use zeroth-order linearisation \citep{tronarp2019probabilistic} with an isotropic covariance factorisation \citep{kramer2022million} for all solvers.
Using zeroth-order linearisation may be unexpected because the Brusselator is stiff, and zeroth-order linearisation is not recommended for stiff systems (first-order linearisation should be used instead; \citet{bosch2024probabilistic}).
However, the Brusselator is also high-dimensional, and in their current implementation, first-order methods cost cubically in the ODE dimension, whereas zeroth-order methods cost linearly \citep{kramer2022million}.
Put differently, we must choose between doing too many steps (by selecting a factorised zeroth-order method) and cubic-complexity linear algebra for a high-dimensional ODE.
In practice, we found the former to be significantly faster and more memory-efficient, which is why we chose it.

We compare two modes of solvers: (i) adaptive target simulation using 200 equispaced points and (ii) adaptive simulation.
In order to not ask the machine for more memory than it can provide, we estimate the memory demands of adaptive simulation as follows before running any code: (i) simulate the terminal value of the ODE, which costs $\Ocal(1)$ memory, and track the number of steps the solver would take. Then, (ii) multiply this number of steps by the memory demands on the initialised solver state to gauge the total memory required.
If this total memory is less than 4 GB (we leave some space for background processes like code editors), run the simulation with both solvers.
If it is more than 4 GB, only run adaptive target simulation but store the predicted memory demands of adaptive simulation regardless.
We measure the wall time of every such simulation.

The results are plotted and discussed in \Cref{figure-experiment-overcoming-memory-limitations} in the main paper.

\section{Additional Experiments}
\label{appendix-section-additional-experiments}
\subsection{Runtime vs. Accuracy: Pleiades}
\label{appendix-section-experiment-pleiades}
\begin{figure*}[t]
    \begin{center}
        \includegraphics{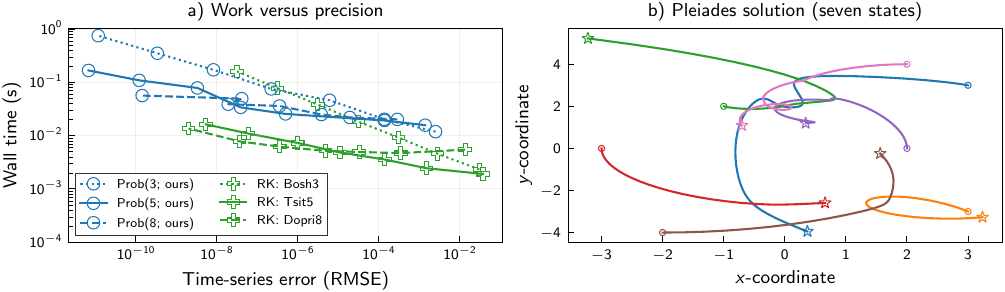}
    \end{center}
    \caption{%
    \begin{sansmath}
        \textit{\nameref*{appendix-section-experiment-pleiades}:}
        Work-precision in a); ODE solution in b).
        The probabilistic solvers (blue) are almost as fast as their non-probabilistic counterparts (green).
        Context: We expect them to be slightly slower since they use covariance matrices (this built-in uncertainty quantification is not free).
        \end{sansmath}
    }
    \label{appendix-figure-experiment-pleiades}
\end{figure*}

\subsubsection{Motivation}
The rigid-body experiment in \Cref{section-experiment-rigid-body} demonstrates how the combination of memory improvements and JIT-compilation accelerate probabilistic ODE solvers and bring their efficiency close to their non-probabilistic counterparts.
This following experiment repeats the setup from \Cref{section-experiment-rigid-body} but uses another differential equation.
Concretely, it investigates whether the performance-similarity between probabilistic and non-probabilistic adaptive target simulation carries over to more challenging ODE problems.
We consider memory questions to be answered by \Cref{section-experiment-rigid-body,section-experiment-overcoming-memory-limitations} and only focus on runtime now.
Like in \Cref{section-experiment-rigid-body}, we don't expect probabilistic solvers to be faster than Runge--Kutta methods in terms of work per attained precision, but we hope their speed is similar.

\subsubsection{Setup}
In this experiment, we solve the Pleiades problem from celestial mechanics \citep{hairer1993solving},
\begin{align}
\label{equation-pleiades}
u''(t) = f(u(t)), \quad u(0) = u_0,
\end{align}
with $f$ and $u_0$ as on page 245 in \citeauthor{hairer1993solving}'s \citeyearpar{hairer1993solving} book.
The Pleiades problem describes the motion of seven stars, formulated as a 14-dimensional differential equation involving second time-derivatives.
When selecting solvers, we mimic the setup from \Cref{section-experiment-rigid-body} but drop adaptive simulation because we now know it will be far less efficient.
Instead, we add two more solvers in adaptive-target-simulation mode.
Concretely:

We solve \Cref{equation-pleiades} from $t=0$ to $t=3$. \Cref{equation-pleiades} involves second-time derivatives, and whereas probabilistic methods can solve these types of problems natively \citep{bosch2022pick}, Diffrax does not offer Runge--Kutta methods that can do the same. Therefore, the Runge--Kutta methods transform the problem into one involving only first time-derivatives before simulating.

For Runge--Kutta methods, we vary the relative tolerances from $\{10^{-3}, ..., 10^{-10}\}$. For the probabilistic solver, we vary from $10^{-2}$ to $10^{-9}$ because we found this leads to similar accuracies. 
For work-precision diagrams, the tolerance need not match between methods as long as the resulting work-precision ratios are comparable.
The absolute tolerances are always $10^3\times$ smaller than the relative tolerances, which is typical; for example, it's the default in Scipy's ODE-solver \citep{virtanen2020scipy}. 

We save the solution at $50$ equispaced points in the time domain.
We consider the wall time (in seconds, best of three runs to remove ``background machine noise'' as much as possible) and the root-mean-square error.

Probabilistic solvers always use zeroth-order linearisation \citep{tronarp2019probabilistic}, an isotropic covariance factorisation \citep{kramer2022million}, time-varying output scales \citep{schober2019probabilistic}, and initialise via Taylor-mode differentiation \citep{kramer2024stable}. 

We choose $L \in \{3, 5, 8\}$ derivatives. Runge--Kutta methods are chosen to match the probabilistic solvers' error decay rate; like before, we include Bosh3 \citep{bogacki19893}, Tsit5 \citep{tsitouras2011runge}, and Dopri8 \citep{prince1981high} via Diffrax \citep{kidger2021on}.

We compute a reference solution with Dopri5 \citep{shampine1986some} and tolerance $10^{-15}$, to evaluate approximation errors and for visualisation.

\subsubsection{Analysis}
The results in \Cref{appendix-figure-experiment-pleiades} show that, once again, the probabilistic solvers are similarly efficient to Runge--Kutta methods but slightly slower.
In comparison to prior work \citep[e.g.][]{kramer2024stable}, being only slightly slower is a success. Further, the results match \citeauthor{kramer2024implementing}'s \citeyearpar{kramer2024implementing} results for terminal-value simulation, which is reassuring.
In general, this experiment underlines the value of adaptive target simulation for computing time-series benchmarks for probabilistic solvers.

\subsection{Runtime on a Small Problem and Without JIT-Limitations}
\label{appendix-section-experiment-runtime-without-jit}

\begin{table*}[t]
\caption{Runtime on the three-body problem:
Adaptive simulation (AS; but by ``cheating'' JIT-compilation, see the explanation above) versus adaptive target simulation (ATS; our method). Lower is better. Winners in bold.}
\label{table-results-sampling}
\begin{center}

\begin{tabular}{c c c c c}
\toprule
No. Samples & Tolerance & No. steps & Time (s): {AS} & Time (s): {ATS (ours)} \\
\midrule
 5 & $10^{-4}$ & 448 & \winner 0.007 & 0.015 \\
 5 & $10^{-7}$ & 2,570 & \winner 0.037 & 0.065 \\
 5 & $10^{-10}$ & 14,469 & \winner 0.215 & 0.347 \\
\midrule
 50 & $10^{-4}$ & 448 & \winner 0.014 & 0.015 \\
 50 & $10^{-7}$ & 2,570 & 0.098 & \winner 0.076 \\
 50 & $10^{-10}$ & 14,469 & 0.536 & \winner 0.374 \\
\midrule
 500 & $10^{-4}$ & 448 & 0.127 & \winner 0.032 \\
 500 & $10^{-7}$ & 2,570 & 0.700 & \winner 0.080 \\
 500 & $10^{-10}$ & 14,469 & 3.851 & \winner 0.393 \\
\bottomrule
\end{tabular}

\end{center}
\end{table*}
\subsubsection{Motivation}
The constant memory requirements of adaptive target simulation have been studied by \Cref{section-experiment-overcoming-memory-limitations,section-experiment-rigid-body}.
The conclusions from these experiments do not depend on the programming environment.
However, the runtime (as in, wall time) gains from \Cref{figure-experiment-overcoming-memory-limitations,figure-experiment-rigid-body} were specific to JAX because JAX cannot compile a function whose memory demands are unknown at compilation time.
This experiment eliminates JAX-specific considerations from the wall-time comparison.
In other words, ``what if JAX could compile adaptive simulation despite its unpredictable memory requirements?''
The results of this experiment are instructive for probabilistic solver codes that are implemented in NumPy \citep{harris2020array} or Julia \citep{bezanson2017julia}; namely, those by  \citet{bosch2024probnumdiffeq,wenger2021probnum}.
We investigate in which case they benefit from our implementation of adaptive target simulation \emph{beyond} memory limitations, which is a scenario outside our primary objectives (memory-challenging equations or JAX).

\subsubsection{Setup}
This experiment investigates the solvers' performances without JAX's compilation constraints.
However, we would still like to use our JAX code because we don't want to reimplement ODE solvers for this one experiment.
Using JAX code for a JAX-independent benchmark necessitates cheating a little in the setup.
However, we only cheat to make competing algorithms more efficient than they would be otherwise, and we leave our implementation as is.
Concretely:

All methods solve the restricted three-body problem \citep[page 129]{hairer1993solving}, another ODE involving second derivatives (after \Cref{appendix-section-experiment-pleiades}).
We replicate the exact setup from \Cref{appendix-section-experiment-pleiades} with two exceptions: we don't include Runge--Kutta methods because they are irrelevant to this benchmark, and all probabilistic solvers use $L=4$ derivatives.

Unlike the previous experiments, we don't measure work-versus-precision for ODE simulation.
Instead, to emphasise using the probabilistic ODE solution for increasingly complex tasks, we compute an increasing number of samples from the ODE posterior \citep{kramer2024implementing}.
Few samples imply ``cheap processing'' of the ODE solution, and many samples will represent ``expensive processing''.
We expect that adaptive target simulation will be more efficient than adaptive simulation for ``expensive processing'' (even with cheating the JIT compilation), and this experiment investigates just how expensive the processing needs to be for this to be true.
We compare the following two routines.

\textit{A. Adaptive target simulation (ours, no cheat)}
 \begin{enumerate}
        \item Solve adaptively using $50$ target points
        \item Compute $K$ joint samples from the posterior
    \end{enumerate}
    \textit{B. Adaptive simulation (cheat):}
    \begin{enumerate}
        \item Solve adaptively storing the full grid
        \item Augment the full grid with $50$ target points
        \item Solve using the augmented grid as a fixed grid
        \item Compute $K$ joint samples from the posterior
        \item Subselect the locations of the samples that correspond to the 50 target points.
\end{enumerate} 
The cheat in B is that the adaptive solve is excluded from the timing to enable JIT-compilation.
Method B is not a realistic setup because it uses a JIT-compiled code for something that can't be JIT-compiled on the first run. Still, it gives insights into what would happen in environments not limited by JAX's compilation restrictions, e.g. Julia \citep{bezanson2017julia}.

We are looking for fast simulation across multiple sample numbers in this experiment.

\subsubsection{Analysis}
The results in \Cref{table-results-sampling} show that for a low number of samples, the (JIT-cheated) combination of adaptive simulation and interpolation is faster than adaptive target simulation, with the roles reversed for larger sample counts.
For small sample counts, the differences are small; for large sample counts, the differences are large.
Notably, we found adaptive simulation to run into memory issues for more than 500 samples, which is why we only show up to 500 samples.
This experiment suggests two conclusions:
(i) in the absence of memory- and JIT-limitations and if we do cheap computations with the ODE solution, the combination of adaptive simulation and interpolation remains the state of the art; (ii) if either of those three constraints does not hold, adaptive target simulation should be used, independent of the programming environment.
However, recall from \Cref{section-experiment-rigid-body,section-experiment-overcoming-memory-limitations} and \Cref{appendix-section-experiment-pleiades} that in JAX, adaptive target simulation always wins.

\end{document}